\documentclass[11pt]{article}

\def\UseSection{
        \numberwithin{equation}{section}
        \newtheorem{theorem}    {Theorem}[section]
        \DefineTheorems 
}
\usepackage[textwidth=500pt,textheight=660pt,centering]{geometry} 

\usepackage{amsfonts}
\usepackage{amsmath,amssymb,amsthm}
\usepackage{appendix}
\usepackage{bbm} 
\usepackage{amsbsy}
\usepackage{enumerate}
\usepackage{cite}

\usepackage[bookmarks, colorlinks, breaklinks,
pdftitle={},pdfauthor={}]{hyperref}

\hypersetup{
  linkcolor=black,
  citecolor=black,
  filecolor=black,
  urlcolor=black}

\usepackage{verbatim}
\usepackage{tikz}
\usepackage{color}

\newcommand{\black}{\black}

\usepackage[font=small,labelfont=bf]{caption}

\numberwithin{equation}{section}

\newcommand{\bb}[1]{\mathbb{#1}}



\newcommand{\blank}[1]{}



\newcommand{\R}{\bb R}
\newcommand{\Z}{\bb Z}
\newcommand{\C}{\bb C}

\newcommand{\N}{\bb N}
\newcommand{\T}{\bb T}



\newcommand{\nnb}	{\nonumber \\}

\def\DefineTheorems{
	\newtheorem{lemma}      [theorem] {Lemma}

	\theoremstyle{definition}
	
	\newtheorem{rk}       [theorem] {Remark}
    \newtheorem{example}       [theorem] {Example}
}

\UseSection
\setcounter{secnumdepth}{3} 

\newcommand{\lbeq}[1]  {\label{e:#1}}
\newcommand{\refeq}[1] {\eqref{e:#1}}    

\title{Kotani's Theorem for the Fourier Transform}
\author{Gordon Slade\thanks{Department of Mathematics,
     University of British Columbia,
     Vancouver, BC, Canada V6T 1Z2.
     https://orcid.org/0000-0001-9389-9497.
     E-mail: {{\tt slade@math.ubc.ca}}}}

\begin{document}

\date{\vspace{-5ex}} 

\maketitle

\begin{abstract}
In 1991, Shinichi Kotani proved a theorem giving
a sufficient condition to conclude that a function $f(x)$
on $\Z^d$ decays like $|x|^{-(d-2)}$ for large $x$, assuming that its Fourier transform
$\hat f(k)$
is such that $|k|^{2}\hat f(k)$ is well behaved for $k$ near zero.
The proof was not published.
We prove an extension of Kotani's Theorem, based on Kotani's unpublished proof.
\end{abstract}

\noindent
Keywords: Fourier transform decay.

\medskip \noindent
MSC2010 Classification:  42B05.

\section{Introduction and main result}

Our purpose here is to revive and extend a neglected theorem of Shinichi Kotani that was stated
without its proof as \cite[Theorem~1.6.1]{MS93}.
The theorem  gives
a sufficient condition to conclude that a function $f(x)$
on $\Z^d$ decays like $|x|^{-(d-2)}$ for large $x$, assuming that its Fourier transform
$\hat f(k)$
is such that $|k|^{2}\hat f(k)$ is well behaved for $k$ near zero.
We state Kotani's Theorem and prove an extension.
Our proof is adapted from
Kotani's proof of \cite[Theorem~1.6.1]{MS93}
in his unpublished 1991 handwritten notes in Japanese
\cite{Kota91}, which were translated into English at that time by Takashi Hara.

Let $\mathbb{T}^d = (\R / 2 \pi \Z )^d$ denote the torus.
For a summable function $f:\Z^d \to \C$ we define its Fourier transform by
\begin{equation}
    \hat f(k) = \sum_{x\in\Z^d}f(x)e^{ik\cdot x}
    \qquad
    (k \in \T^d).
\end{equation}
The inverse Fourier transform is
\begin{equation}
    f(x) = \int_{\T^d}\hat{f}(k) e^{-i k \cdot x } \frac{dk}{(2\pi)^{d}}
    \qquad
    (x \in \Z^d).
\end{equation}
For $d>2$, we define the dimension-dependent constants
\begin{equation}
\lbeq{nddef}
    a_d=\frac{\Gamma(\frac{d-2}{2})}{4 \pi^{d/2}},
    \qquad
    n_d= \begin{cases}
    d-2 & (d>4)
    \\
    d-1 & (d=3,4).
    \end{cases}
\end{equation}
For an open set $U\subset \T^d$, $C^n(U)$ denotes the space of functions $\hat f:U \to \C$
with derivatives $\nabla^\alpha \hat f$ continuous for all $|\alpha| \le n$.
The following is Kotani's Theorem in its original form, as stated in \cite[Theorem~1.6.1]{MS93}.
Kotani's proof was not published.

\begin{theorem}
\label{thm:kotani-original}
Let $d >2$.
Given $\hat{f} \in C^{d-2} (\mathbb{T}^d \backslash \{0\})$, let $\hat{h}(k)=|k|^2\hat{f}(k)$.
Suppose that there is a neighbourhood  $0 \in U \subset \mathbb{T}^d$ such that
$\hat{h} \in  	C^{n_d}(U)$.
Then as $|x| \to \infty$,
\begin{equation}
\lbeq{gxasy-original}
	f(x) =
	a_d \hat{h}(0) \frac{1}{|x|^{d-2}} + o\left(\frac{1}{|x|^{d-2}} \right)  .
\end{equation}
\end{theorem}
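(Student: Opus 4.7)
The plan is to localize the Fourier analysis near the origin in $\T^d$, extract the singular term $\hat h(0)/|k|^2$ that governs the leading $|x|^{-(d-2)}$ decay, and show that all other contributions decay strictly faster. I would fix a smooth cutoff $\chi \in C^\infty(\T^d)$ with $\chi \equiv 1$ on a small neighbourhood of $0$ and $\mathrm{supp}\,\chi \subset U$, and decompose
\[
    \hat f(k) = \hat h(0)\,\frac{\chi(k)}{|k|^2} + \chi(k)\,\frac{\hat h(k) - \hat h(0)}{|k|^2} + (1 - \chi(k))\hat f(k),
\]
treating the three pieces separately.

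The third piece is the easiest: since $1-\chi$ vanishes in a neighbourhood of the only singular point of $\hat f$, the product $(1-\chi)\hat f$ belongs to $C^{d-2}(\T^d)$, and applying the Riemann--Lebesgue lemma to each of its derivatives of order $d-2$ (using $|x|^{d-2} \leq C\sum_{|\alpha|=d-2}|x^\alpha|$) yields an inverse Fourier transform of order $o(|x|^{-(d-2)})$. For the first piece, I would exploit that $\chi/|k|^2$ is compactly supported on $\R^d$ and write
\[
    \int_{\T^d}\frac{\chi(k)\,e^{-ik \cdot x}}{|k|^2}\frac{dk}{(2\pi)^d} = \int_{\R^d}\frac{e^{-ik \cdot x}}{|k|^2}\frac{dk}{(2\pi)^d} - \int_{\R^d}\frac{(1-\chi(k))\,e^{-ik \cdot x}}{|k|^2}\frac{dk}{(2\pi)^d}.
\]
The first integral on the right equals $a_d|x|^{-(d-2)}$ by the classical Fourier transform of the Newtonian potential on $\R^d$; the integrand of the second integral is $C^\infty$ on $\R^d$ with all derivatives integrable (since $1-\chi$ vanishes where $|k|^{-2}$ is singular), so repeated integration by parts gives decay faster than any polynomial in $|x|$. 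Multiplying by $\hat h(0)$ recovers exactly the claimed leading term.

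The middle piece is the most delicate. I would Taylor expand $\hat h$ at $0$ through total order $n_d$,
\[
    \hat h(k) - \hat h(0) = \sum_{1 \leq |\alpha| \leq n_d - 1}\frac{\partial^\alpha \hat h(0)}{\alpha!}k^\alpha + r(k),
\]
where $r \in C^{n_d}$ vanishes to order $n_d$ at $0$. For each monomial term, the inverse Fourier transform of $\chi(k) k^\alpha/|k|^2$ is handled by the same $\R^d$-comparison trick together with the standard formulas for Fourier transforms of homogeneous distributions, giving decay like $|x|^{-(d-2+|\alpha|)}$, which is $o(|x|^{-(d-2)})$ for every $|\alpha| \geq 1$. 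The remainder $\chi(k) r(k)/|k|^2$ is then estimated by combining $\partial^\beta r(k) = O(|k|^{n_d - |\beta|})$ from the $n_d$-fold vanishing of $r$ with $\partial^\gamma(|k|^{-2}) = O(|k|^{-2-|\gamma|})$ and summing Leibniz terms of total degree $d-2$, which yields enough integrability on $\T^d$ to conclude the same $o$-bound.

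The principal obstacle is this last step: one must align the smoothness order $n_d$ of $\hat h$ against the singular factor $|k|^{-2}$ and the $d-2$ integrations by parts demanded by the target decay rate, and must moreover squeeze $o$-decay from estimates that a priori only give $O$-decay. This is where the dimension-dependent choice of $n_d$ comes into play: in $d > 4$ the natural count $d-2$ suffices, while in $d = 3, 4$ a borderline monomial or remainder contribution requires one extra derivative, forcing the definition $n_d = d-1$ in \refeq{nddef}.
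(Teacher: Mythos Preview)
Your overall strategy---localize with $\chi$, dispose of $(1-\chi)\hat f$ by Riemann--Lebesgue, and then Taylor-expand $\hat h$ about $0$---is a legitimate alternative to the paper's proof. The paper, after the same localization, instead passes immediately to physical space via the convolution identity $I_1(x)=a_d\int_{\R^d}|x-y|^{-(d-2)}s(y)\,dy$ with $s$ the inverse transform of $\hat h\chi$, and extracts the asymptotic by splitting the convolution into $|y|<\epsilon|x|$ and $|y|\ge\epsilon|x|$ and controlling $|y|^{n_d}s(y)$ in $L^q$. Your Fourier-side Taylor expansion replaces that convolution analysis by integration by parts on compactly supported integrands, which is arguably more elementary for the subleading pieces; the paper's approach, in return, treats all of $\hat h$ at once and delivers the quantitative error bound of the extended theorem.

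There is, however, a genuine gap in your treatment of the leading piece $\hat h(0)\,\chi(k)/|k|^2$. You split $\chi/|k|^2=1/|k|^2-(1-\chi)/|k|^2$ on $\R^d$ and assert that the second term is $C^\infty$ with all derivatives integrable, hence has rapidly decaying inverse transform. Your parenthetical justification (``$1-\chi$ vanishes where $|k|^{-2}$ is singular'') addresses only the origin: at infinity $1-\chi\equiv 1$, so $\nabla^\alpha\bigl((1-\chi)/|k|^2\bigr)$ behaves like $|k|^{-2-|\alpha|}$, which is \emph{not} in $L^1(\R^d)$ for $|\alpha|\le d-2$. In fact the inverse transform of $(1-\chi)/|k|^2$ is not rapidly decaying at all---distributionally it equals $a_d|x|^{-(d-2)}$ minus the very quantity you are trying to compute, so the splitting is circular as written. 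The fix is to drop this $\R^d$-comparison for the leading term and use instead
\[
\int_{\R^d}\frac{\chi(k)}{|k|^2}\,e^{-ik\cdot x}\,\frac{dk}{(2\pi)^d}
= a_d\int_{\R^d}\frac{\check\chi(y)}{|x-y|^{d-2}}\,dy ,
\]
where $\check\chi$ is Schwartz with $\int\check\chi=\chi(0)=1$; the asymptotic $a_d|x|^{-(d-2)}(1+o(1))$ then follows from a near/far decomposition in $y$. This is exactly the paper's convolution argument specialized to $\hat h\equiv 1$. Once you invoke it here, the rest of your outline goes through, provided that for the monomial terms you likewise abandon the $\R^d$-comparison (which suffers the same non-integrability at infinity) and argue directly by integration by parts on the compactly supported functions $\chi k^\alpha/|k|^2$ and $\chi r/|k|^2$, exactly as in your Leibniz estimate for the remainder.
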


In \cite[Example~1.6.2]{MS93}, it is shown that the hypothesis of existence of
at least $d-2$
derivatives for $\hat{h}$ cannot be relaxed.  Namely, the
example exhibits a function $\hat{f}$ on $\mathbb{T}^d$, for $d \ge 3$,
with $\hat{h}(k) =|k|^2 \hat{f}(k)$ having $d-3$ but not
$d-2$ derivatives in a neighbourhood of $k=0$, and with $f(x)$
not bounded above by a multiple of $|x|^{-(d-2)}$ for large $x$.

\begin{example}[Random walk Green function]
\label{ex:srw}
Let $d>2$.
Let $D$ be the one-step transition function of a random walk
on $\Z^d$, i.e., $D:\Z^d\to [0,1]$ with $\sum_{x\in\Z^d} D(x)=1$.
Suppose that $D$ obeys $\Z^d$-symmetry and has moments up to and including order
$n_d+2$.
Then $\hat D(k)$ has a Taylor expansion about $k=0$ to order $n_d+2$.
The Green function $C(x)$ is the inverse Fourier transform of $\hat C(k) = \frac{1}{1-\hat D(k)}$.
Let $\sigma^2 = \sum_x |x|^2 D(x)$.
Then $\hat h(k) = |k|^2 \hat C(k)$
obeys $\hat h(0)=2d\sigma^{-2}$.  Moreover, $1/\hat h(k) = |k|^{-2}[1-\hat D(k)]$
has an expansion about $k=0$ to order $n_d$, with constant term $\sigma^2/(2d)$,
so $\hat h\in C^{n_d}(\T^d)$.  It then follows from Theorem~\ref{thm:kotani-original}
 that
\begin{equation}
\lbeq{RWasy}
    C(x) = a_d\frac{2d}{\sigma^2}\frac{1}{|x|^{d-2}}
    + o\left(\frac{1}{|x|^{d-2}} \right).
\end{equation}
This provides an alternate approach to obtaining leading Green function asymptotics,
compared to, e.g., \cite{Uchi98,LL12}.  Fourier analysis is also used in \cite{Uchi98,LL12}, but
the analysis in the proof of Theorem~\ref{thm:kotani-original} is relatively mild.
\end{example}

\begin{rk}
The hypothesis of existence of $n_d+2$ moments in Example~\ref{ex:srw} is
more restrictive than
the hypothesis in \cite[Theorem~2]{Uchi98}  to conclude \refeq{RWasy}, which requires that
$D$ has at least $d-2$ moments for $d>4$,
at least two moments for $d=3$,  while for
$d=4$ the requirement is $\sum_x |x|^2 (\log |x|) D(x)<\infty$.
However, unlike \cite[Theorem~2]{Uchi98}, Theorem~\ref{thm:kotani-original} allows negative
values for $D(x)$.
The hypothesis in Theorem~\ref{thm:kotani-original} differs from the one in \cite[Theorem~1.4]{Hara08}
(see also \cite[Section~7.1]{Uchi98}),
which concerns functions of the form $\hat f(k) = [1-\hat J(k)]^{-1}$ with $J$ subject
to certain hypotheses in $x$-space including the relatively mild estimate
$|J(x)|\le O(|x|^{-(d+2+\epsilon)})$ with $\epsilon \ge 0$, again without
requiring $J(x) \ge 0$.  The proof of \cite[Theorem~1.4]{Hara08} is considerably more
involved than the proof of Theorem~\ref{thm:kotani}
presented here.
\end{rk}

The next theorem is an extension of Theorem~\ref{thm:kotani-original}
with an explicit error estimate and a weaker hypothesis on
$\hat h$.
For $p \in [1,\infty)$, we use the norms
\begin{equation}
    \|\hat f\|_{L^p}^p = \int_{\T^d}|\hat f(k)|^p\frac{dk}{(2\pi)^d}  .
\end{equation}
For $n\in\N$, the Sobolev space $W^{n,p}$ denotes the space of functions on $\T^d$ which have all
derivatives of order less than or equal to $n$ in $L^p(\T^d)$, with norm
\begin{equation}
    \|\hat f\|_{W^{n,p}} = \sum_{|\alpha| \le n} \|\nabla^\alpha \hat f\|_{L^p}.
\end{equation}

\begin{theorem}
\label{thm:kotani}
Let $d > 2$.  Choose $p_d \in (\frac{d}{d-2},2]$ for $d>4$, and set $p_d=2$ for $d=3,4$.
Let $U,V$ be open subsets of $\T^d$ with $0 \in V \subset \overline{V} \subset U$.
Let $\hat f$ be a continuous function on $\T^d \setminus \{0\}$ such that $\hat f \in W^{d-2,1}(\T^d\setminus V)$.
Let $\hat{h}(k)=|k|^2\hat{f}(k)$ be continuous on $\T^d$
and suppose that $\hat{h} \in W^{n_d,p_d}(U)$.
Then
\begin{equation}
\lbeq{gxasy}
	f(x) =
	\frac{a_d}{|x|^{d-2}} \left(  \hat{h}(0) + R_f(x)\right)
\end{equation}
with $\lim_{x \to \infty}R_f(x)=0$.
 Moreover, $R_f$ obeys the upper bound
\begin{equation}
\lbeq{Rbound}
    |R_f(x)| \le
    c
    \left(
    \|\hat f  \|_{W^{d-2,1}(\T^d \setminus V)}
    +
    \|  \hat h   \|_{W^{n_d,p_d}(U)}
    \right)
    ,
\end{equation}
with constant $c$ depending on $U,V,d$.
\end{theorem}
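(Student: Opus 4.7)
The strategy is to split $\hat f$ into a piece supported near $k=0$ and a piece supported away, apply iterated Fourier integration by parts together with Riemann--Lebesgue on each piece to gain the factor $|x|^{-(d-2)}$, and extract the leading constant $a_d\hat h(0)$ from the inverse Riesz identity $|k|^{-2}\leftrightarrow a_d|x|^{-(d-2)}$ on $\R^d$.

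Fix a smooth cutoff $\chi\colon\T^d\to[0,1]$ with $\chi\equiv 1$ on a neighbourhood of $\overline V$ and $\operatorname{supp}\chi\subset U$, and decompose $f=f_{\mathrm{out}}+f_{\mathrm{in}}$ according to $\hat f=(1-\chi)\hat f+\chi\hat h/|k|^2$. Since $(1-\chi)\hat f$ vanishes on a neighbourhood of $0$ and $\hat f\in W^{d-2,1}(\T^d\setminus V)$, it lies in $W^{d-2,1}(\T^d)$. Standard integration by parts represents $x^\alpha f_{\mathrm{out}}(x)$ (for $|\alpha|\le d-2$) as the Fourier coefficient of $\nabla^\alpha[(1-\chi)\hat f]\in L^1$, so Riemann--Lebesgue gives $x^\alpha f_{\mathrm{out}}(x)\to 0$ as $|x|\to\infty$, while Hausdorff--Young gives $|f_{\mathrm{out}}(x)|\le c\,|x|^{-(d-2)}\|\hat f\|_{W^{d-2,1}(\T^d\setminus V)}$.

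For $f_{\mathrm{in}}$, I transfer to an integral over $\R^d$ in a chart at the origin (so that $\chi$ has compact support) and split $\chi\hat h/|k|^2 = \hat h(0)\chi(k)/|k|^2 + \chi(k)(\hat h(k)-\hat h(0))/|k|^2$. The Riesz identity on $\R^d$ writes the Fourier inverse of the first summand as $a_d(\check\chi *|\cdot|^{-(d-2)})(x)$ with $\check\chi$ Schwartz; expanding $|x-y|^{-(d-2)}=|x|^{-(d-2)}(1+O(|y|/|x|))$ and using the rapid decay of $\check\chi$ delivers the main term $a_d\hat h(0)|x|^{-(d-2)}+o(|x|^{-(d-2)})$. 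It then remains to bound the Fourier inverse of $\phi(k):=\chi(k)(\hat h(k)-\hat h(0))/|k|^2$. Integration by parts reduces this to showing $\nabla^\alpha\phi\in L^1(\R^d)$ for each $|\alpha|\le d-2$, with norm bounded by $\|\hat h\|_{W^{n_d,p_d}(U)}$, from which Riemann--Lebesgue supplies the $o$ improvement and Hausdorff--Young supplies the stated Sobolev estimate \refeq{Rbound}.

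The decisive work lies in these derivative bounds. A Leibniz expansion of $\nabla^\alpha\phi$ yields smooth, bounded terms on $\{\nabla\chi\neq 0\}$, and near the origin produces terms of the shape $(\nabla^\beta\hat h)(k)\cdot k^\gamma/|k|^{2+|\gamma|}$ with $|\beta|+|\gamma|\le d-2$. These are handled in $L^1$ by H\"older with exponent pair $(p_d,p_d')$: the vanishing of $\hat h-\hat h(0)$ at $k=0$ is exploited through a Hardy-type inequality to absorb two factors of $|k|^{-1}$ into the $W^{n_d,p_d}$-norm of $\hat h$. For $d>4$ the condition $p_d>d/(d-2)$ is precisely the Sobolev embedding $W^{d-2,p_d}\hookrightarrow C^0$ and renders all residual weights locally integrable to the power $p_d'$; for $d=3,4$ the weight $|k|^{-2}$ is borderline in the $L^{p_d'}$ scale, forcing one extra derivative ($n_d=d-1$) and $L^2$-based ($p_d=2$) Hardy inequalities. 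I expect the main obstacle to be exactly this derivative bookkeeping, where one must simultaneously produce the pointwise $o(|x|^{-(d-2)})$ and the explicit Sobolev estimate \refeq{Rbound}, threading the Sobolev/Hardy trade-off so as to use the assumed regularity of $\hat h$ and no more.
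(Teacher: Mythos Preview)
Your handling of the outer piece $(1-\chi)\hat f$ matches the paper's exactly. For the inner piece, however, the paper does \emph{not} split off $\hat h(0)\chi/|k|^2$ and then try to differentiate $\phi(k)=\chi(k)(\hat h(k)-\hat h(0))/|k|^2$ in $k$-space. Instead it keeps the full inner integrand $\hat s(k)|k|^{-2}$ (with $\hat s=\chi\hat h$), applies the Riesz identity to pass to the physical-space convolution
\[
I_1(x)=a_d\int_{\R^d}\frac{s(y)}{|x-y|^{d-2}}\,dy,
\]
and splits \emph{this} integral at $|y|=\epsilon|x|$. The Sobolev hypothesis on $\hat h$ enters only through a single Hausdorff--Young step: $\nabla^{n_d}\hat s\in L^{p_d}$ gives $u(y)=|y|^{n_d}s(y)\in L^{q_d}$, and this weighted decay of $s$ (together with $s\in L^1$, $\int s=\hat h(0)$, and $u(y)\to 0$) is all that is needed to extract the main term from the near part and to kill the far part. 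There is no Leibniz expansion of $\phi$, no Hardy inequality, and no term-by-term derivative bookkeeping in $k$-space.

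Your $k$-space route can be made to work, but as written it has a real gap. The Leibniz terms are not of the shape $(\nabla^\beta\hat h)\,k^\gamma/|k|^{2+|\gamma|}$, which would carry only an $|k|^{-2}$ singularity; in fact $\nabla^\gamma(|k|^{-2})$ is homogeneous of degree $-2-|\gamma|$, so the genuine terms are of order $|\nabla^\beta\hat h|/|k|^{2+|\gamma|}$ with $|\beta|+|\gamma|\le d-2$, and the worst singularity is $|k|^{-d}$. Moreover, a Hardy inequality can trade at most one power of $|k|^{-1}$ for one derivative from the single vanishing condition $(\hat h-\hat h(0))(0)=0$; it cannot ``absorb two factors'' because $\nabla\hat h(0)$ need not vanish. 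What actually puts these terms into $L^1$ is Sobolev embedding applied to $\nabla^j\hat h\in W^{n_d-j,p_d}$ (into some $L^{r_j}$, or into $C^{0,\alpha}$ for $j=0$) followed by H\"older against the weight, and checking the exponents one recovers exactly the thresholds $p_d>d/(d-2)$ for $d>4$ and the extra derivative $n_d=d-1$ for $d=3,4$. So the strategy is salvageable, but the mechanism you name is not the one that works, and the paper's physical-space argument sidesteps the whole issue.
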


The bound \refeq{Rbound} does not decay in $x$ but it does provide a bound
$|f(x)| \le c_1 |x|^{-(d-2)}$ with explicit dependence of $c_1$ on $\hat f$.
Note that in \refeq{Rbound} the norm of $\hat f$
is on $\T^d \setminus V$, so the possibly singular behaviour of $\hat f$ at the origin
does not contribute.  Also, the hypothesis on $\hat h$ permits singular behaviour of
derivatives of $\hat h$.\footnote{An application of Theorem~\ref{thm:kotani} to the lace expansion
is given in Version~1 of this arXiv
submission; subsequently a more direct proof was discovered
that does not require Theorem~\ref{thm:kotani} for that purpose \cite{Slad20_lace}.}

\section{Proof of Theorem~\ref{thm:kotani}}

Our proof of Theorem~\ref{thm:kotani} is adapted from
Kotani's unpublished proof of Theorem~\ref{thm:kotani-original}.
The proof  uses the Fourier transform
\begin{equation}
    \hat f(k) = \int_{\R^d} f(y) e^{ik\cdot y} dy
    \qquad
    (k \in \R^d)
\end{equation}
of suitable functions $f:\R^d \to \C$, with inverse Fourier transform
\begin{equation}
\lbeq{RdinvFT}
    f(y) = \int_{\R^d} \hat f(k) e^{-ik\cdot y} \frac{dk}{(2\pi)^d}
    \qquad
    (y \in \R^d).
\end{equation}
Two items in the proof are deferred to two lemmas that follow the proof.

\begin{proof}[Proof of Theorem~\ref{thm:kotani}]
Let $\hat\chi \in C^\infty (\T^d)$
be a fixed bump function supported in $U$, with  $\hat\chi(k) =1$
for $k\in V$ and $\hat\chi (k) \in [0,1]$ for all $k$.
We set $\hat{g}(k)=\hat f(k)(1-\hat\chi(k))$, and make the decomposition
\begin{align}
    f(x) &
    = \int_{\T^d} \hat f(k)  \hat\chi(k) e^{-ik\cdot x} \frac{dk}{(2\pi)^d}
    + \int_{\T^d} \hat g(k)  e^{-ik\cdot x} \frac{dk}{(2\pi)^d}
    \nnb &
    =
    I_1(x)+I_2(x).
\end{align}
By hypothesis, $\nabla^{\alpha} \hat g \in L^1(\T^d)$ for $|\alpha| \le d-2$,
so by the
Riemann--Lebesgue Lemma (and integration by parts)
\begin{equation}
    I_2(x) = o(|x|^{-(d-2)}).
\end{equation}
There is also the quantitative estimate, for any component $i$,
\begin{equation}
    |I_2(x)| \le \frac{1}{x_i^{d-2}} \| \nabla^{d-2}_i \hat g\|_{L^1(\T^d)}.
\end{equation}
The $C^\infty$ function $\hat\chi$ depends only on the neighbourhoods $U,V$ and does not depend on $\hat f$.  There is therefore a constant depending only on $d,U,V$ (and the choice of $\hat\chi$) such that
\begin{equation}
    |I_2(x)| \le \frac{c}{|x|^{d-2}} \|  \hat f   \|_{W^{d-2,1}(\T^d \setminus V)}.
\end{equation}
Thus $I_2(x)$ obeys the desired error estimate and
the main term is $I_1(x)$.

We can extend the integral defining $I_1(x)$ to all of $\R^d$.
We define a function $\hat s$ on $\R^d$ by
\begin{equation}
    \hat s(k) =
    \begin{cases}
        \hat h(k) \hat\chi(k) & (k\in U)
        \\
        0 & (k \in \R^d\setminus U).
    \end{cases}
\end{equation}
By Lemma~\ref{lem:s},
the inverse Fourier transform $s$ of $\hat s$, defined by \refeq{RdinvFT}, satisfies
$s \in L^t(\R^d)$ for $t \in [1,\infty)$ and
$\|s\|_{L^1(\R^d)} \le c \|\hat h\|_{W^{n_d,p_d}(U)}$.
By definition, and by the fact\footnote{The
identity \refeq{I1} is well known when $\hat s$ is
a Schwartz function, e.g., \cite[Theorem~2.4.6]{Graf10}.
Our $\hat s$, although continuous and of compact support, need not be a Schwartz function.
The following argument shows that \refeq{I1}
holds nevertheless.  Fix a $C^\infty$ function
$\hat\psi:\R^d\to [0,\infty)$ with compact support,
such that $\hat\psi_\epsilon(k)=\epsilon^{-d}\hat\psi(k/\epsilon)$
is an approximate identity (as in \cite[Definition~1.2.15]{Graf10}).  In particular,
$\|\hat\psi_\epsilon\|_{L^1}=1$.
Let $\hat s_\epsilon(k)=(\hat s*\hat\psi_\epsilon)(k)$; then $\hat s_\epsilon$ is a Schwartz function so
\refeq{I1} holds with $s$ replaced by $s_\epsilon$.
Also, $\|\hat{s}_\epsilon\|_{L^\infty} \le \|\hat s\|_{L^\infty} < \infty$,
and $|s_\epsilon| = |s\psi_\epsilon| \le |s|$ since $\psi_\epsilon(y) \le \|\hat\psi_\epsilon\|_{L^1}=1$.
Then we obtain \refeq{I1} by letting
$\epsilon \to  0$, using $\hat s_\epsilon(k) \to \hat s(k)$,
$\psi_\epsilon (y) \to 1$, and dominated convergence.}
that $|k|^{-2}$ is the Fourier transform
of $a_d|y|^{-(d-2)}$,
\begin{equation}
\lbeq{I1}
    I_1(x) = \int_{\R^d} \frac{\hat s(k)}{|k|^2} e^{-ik\cdot x} \frac{dk}{(2\pi)^d}
    =
    a_d \int_{\R^d} \frac{1}{|x-y|^{d-2}}s(y)dy.
\end{equation}
Given $\epsilon \in (0,1)$, we further decompose
\begin{align}
    I_1(x)
    &=
    a_d \int_{|y|<\epsilon |x|} \frac{1}{|x-y|^{d-2}}s(y)dy
    +
    a_d \int_{|y|\ge \epsilon |x|} \frac{1}{|x-y|^{d-2}}s(y)dy
    \nnb & =
    J_1(x) + J_2(x).
\end{align}

Since $s\in L^1(\R^d)$, the definition of $\hat h$ gives
\begin{equation}
    \hat h(0) = \hat s(0) = \int_{\R^d} s(y) dy.
\end{equation}
Therefore,
\begin{align}
    |x|^{d-2}I_1(x) -  a_d \hat h(0)
    &=
    |x|^{d-2}J_1(x) + |x|^{d-2}J_2(x) -  a_d \int_{\R^d}s(y)dy
    \nnb & =
    |x|^{d-2}J_2(x) -
    a_d \int_{|y|<\epsilon |x|} \left(1- \frac{|x|^{d-2}}{|x-y|^{d-2}}  \right) s(y)dy
    - a_d \int_{|y|\ge \epsilon |x|}s(y)dy.
\end{align}
By Lemma~\ref{lem:J2},  $|x|^{d-2}J_2(x)=o(1)$ and also
$J_2(x) = O(\|\hat h\|_{W^{n_d,p_d}(U)})$
as in \refeq{Rbound}.
By Lemma~\ref{lem:s}, the final term is also $o(1)$ and at most
$O(\|\hat h\|_{W^{n_d,p_d}(U)})$.
For the middle term, we write $\hat x = x/|x|$ and
use the fact that when $|y|<\epsilon |x|$ (now we take $\epsilon$ small) we have
\begin{equation}
    \left| 1 - \frac{|x|^{d-2}}{|x-y|^{d-2}} \right|
    =
    \left| 1 - \frac{1}{|\hat x- y/|x||^{d-2}} \right|
    \in
    \left( 1-\frac{1}{(1-\epsilon)^{d-2}},1 - \frac{1}{(1+\epsilon)^{d-2}} \right).
\end{equation}
Therefore, since $s \in L^1(\R^d)$, the second term is $o(1)$ by dominated convergence,
and it is $O(\|\hat h\|_{W^{n_d,p_d}(U)})$ since $\|s\|_{L^1(\R^d)}$
is by Lemma~\ref{lem:s}.

This completes the proof, subject to the following two lemmas.
\end{proof}

We write $n$ in place of $n_d$ (recall \refeq{nddef}),
$p$ in place of $p_d$ (specified in Theorem~\ref{thm:kotani}), and
let $q$ be the dual index defined by
$\frac 1q + \frac 1p =1$.
Both lemmas use the function
\begin{equation}
    u(y)=|y|^{n }s(y) \qquad (y \in \R^d).
\end{equation}

\begin{lemma}
\label{lem:s}
Under the assumptions of Theorem~\ref{thm:kotani},
$\|u\|_{L^{q}(\R^d)} \le c\|\hat h\|_{W^{n,p}(U)}$,
$\lim_{y\to\infty}u(y)=0$,
$|u(y)| \le c \|\hat h\|_{W^{n,p}(U)}$,
$\|s\|_{L^1(\R^d)} \le c \|\hat h\|_{W^{n,p}(U)}$,
and $s\in L^t(\R^d)$ for all $t \in [1,\infty)$.
\end{lemma}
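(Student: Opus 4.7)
The plan is to exploit that $\hat s = \hat h\,\hat\chi$ is compactly supported in $U$ and belongs to $W^{n,p}(\R^d)$. The Leibniz rule combined with $\hat\chi \in C^\infty$ gives $\|\hat s\|_{W^{n,p}(\R^d)} \le c\|\hat h\|_{W^{n,p}(U)}$ with constant depending on $U$, $d$, and the fixed choice of $\hat\chi$. With the sign convention of \refeq{RdinvFT}, integration by parts in $k$ yields the pointwise identity
\begin{equation*}
    y^\alpha s(y) = (-i)^{|\alpha|}(\nabla^\alpha \hat s)^\vee(y) \qquad (|\alpha| \le n),
\end{equation*}
which makes sense since each $\nabla^\alpha \hat s$ is a compactly supported $L^p$ function, hence in $L^1(\R^d)$. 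The elementary norm equivalence $|y|^n \le c\sum_{i=1}^d |y_i|^n$ will let me reduce the multiplier $|y|^n$ to coordinate derivatives, and Hausdorff--Young (applicable since $p \in [1,2]$) will convert $L^p$ bounds on $\nabla^\alpha \hat s$ into $L^q$ bounds on $y^\alpha s$.

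First, for $\|u\|_{L^q(\R^d)} \le c\|\hat h\|_{W^{n,p}(U)}$, Hausdorff--Young gives $\|y_i^n s\|_{L^q} \le c\|\partial_i^n \hat s\|_{L^p}$ for each coordinate $i$, and summing over $i$ together with $|y|^n \le c\sum_i |y_i|^n$ yields the bound. For the pointwise bound $|u(y)| \le c\|\hat h\|_{W^{n,p}(U)}$ and the decay $u(y) \to 0$, the compact support of $\hat s$ in $U$ and Hölder give $\|\nabla^\alpha \hat s\|_{L^1} \le |U|^{1/q}\|\nabla^\alpha \hat s\|_{L^p}$; the inverse Fourier transform of an $L^1$ function is bounded by its $L^1$ norm and tends to $0$ at infinity by Riemann--Lebesgue. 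Applying this to each $\partial_i^n \hat s$ and again invoking the norm equivalence yields both statements for $u$.

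For $\|s\|_{L^1(\R^d)}$ I would split the integral at $|y|=1$. On $\{|y|\le 1\}$, use $\|s\|_{L^\infty} \le \|\hat s\|_{L^1} \le c\|\hat h\|_{W^{n,p}(U)}$. On $\{|y|>1\}$, write $|s(y)| = |y|^{-n}|u(y)|$ and apply Hölder:
\begin{equation*}
    \int_{|y|>1}|s(y)|\,dy \le \|u\|_{L^q(\R^d)}\left(\int_{|y|>1}|y|^{-np}\,dy\right)^{1/p}.
\end{equation*}
The tail integral converges exactly when $np > d$; for $d>4$ the hypothesis $p > d/(d-2)$ gives $np = (d-2)p > d$, while for $d \in \{3,4\}$, $n = d-1$ and $p=2$ give $np = 2(d-1) > d$. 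Finally, $s \in L^t(\R^d)$ for every $t \in [1,\infty)$ follows by interpolating between this $L^1$ bound and $\|s\|_{L^\infty} \le \|\hat s\|_{L^1} < \infty$.

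The main obstacle—and what pins down the admissible range of $p_d$—is balancing the Hausdorff--Young exponent $q$ against the tail decay rate $|y|^{-np}$: a weaker hypothesis on $\hat h$ (larger $p$, hence larger $q$) comes at the cost of slower decay of $|y|^{-n}$, and the range of $p_d$ in Theorem~\ref{thm:kotani} is precisely the one for which the $L^q$ Hausdorff--Young estimate and the Hölder integrability of $|y|^{-n}$ at infinity are simultaneously consistent.
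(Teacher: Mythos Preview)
Your proof is correct and follows essentially the same route as the paper's: Hausdorff--Young for the $L^q$ bound on $u$, compact support plus Riemann--Lebesgue for the pointwise bound and decay, the split at $|y|=1$ together with H\"older for $\|s\|_{L^1}$, and interpolation for the remaining $L^t$ spaces. The only cosmetic differences are that the paper controls $\int_{|y|\le 1}|s|$ via $\|s\|_{L^q}$ rather than $\|s\|_{L^\infty}$ and interpolates between $L^1$ and $L^2$ rather than $L^1$ and $L^\infty$; note, however, that your closing commentary has the monotonicity backwards---on the bounded set $U$ a larger $p$ is a \emph{stronger} hypothesis, and larger $p$ gives \emph{smaller} conjugate $q$---though this does not affect any step of the argument.
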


\begin{proof}
By hypothesis, $\nabla_i^{n}\hat s \in L^p(\R^d)$.  Its inverse Fourier transform
is essentially $y_i^{n}s(y)$.
By H\"older's inequality, $|y|^n \le d^{n-2} \sum_{i=1}^d |y_i|^n$.
Since $p \in (1,2]$ by hypothesis, we can apply
the Hausdorff--Young inequality to see that
(the constant $c$ can change from one occurrence to the next)
\begin{equation}
\lbeq{ubd1}
    \|u\|_{L^q(\R^d)} \le c\max_i \|y_i^{n}s(y)\|_{L^q(\R^d)}
    \le c\max_i \|\nabla_i^{n}\hat s\|_{L^p(U)}
    \le c  \| \hat h\|_{W^{n,p}(U)}
    .
\end{equation}
Also,
\begin{equation}
    |u(y)| \le c\max_i |y_i^{n} s(y)|
    = c\max_i \Big|\int_{\R^d} \nabla_i^{n}\hat s(k) e^{-ik\cdot y} \frac{dk}{(2\pi)^d}\Big|.
\end{equation}
Because $\nabla_i^{n}\hat s$ has compact support and is in $L^p$, it is also in $L^1$,
so the right-hand side goes to zero as $|y|\to\infty$, by the Riemann--Lebesgue Lemma.
It is also bounded above by a multiple of the $L^p$ norm controlled in \refeq{ubd1}.
This proves the three statements for $u$.

Since $\hat s$ is continuous and has compact support,
 $\hat s \in L^t(\R^d)$ for all $t \in [1,\infty)$.  Therefore,
by the Hausdorff--Young inequality, $s \in L^r(\R^d)$ for all $r \ge 2$.
It follows that
\begin{equation}
    \int_{|y|\le 1} |s(y)|dy \le c \left( \int_{|y|\le 1} |s(y)|^q dy \right)^{1/q}
    \le c \|s\|_{L^q(\R^d)} \le c \|\hat s\|_{L^p(\R^d)}
    \le
    c\|\hat h\|_{W^{d-2,p}(U)}.
\end{equation}
Also, by H\"older's inequality,
\begin{align}
    \int_{|y|> 1}|s(y)|dy & \le
    \left( \int_{|y|> 1}\frac{1}{|y|^{pn}} dy \right)^{1/p}
    \left( \int_{|y|> 1}|u(y)|^q dy \right)^{1/q}  .
\end{align}
For $d>4$ we have $pn =p(d-2) > d$ by hypothesis, and for $d=3,4$ we have $pn = 2(d-1)>d$.
Therefore the first integral is bounded.  The second is bounded by
$\|u\|_{L^q(\R^d)}$, whose upper bound we have already discussed.  This proves that
$s \in L^1(\R^d)$, with the desired estimate.  Since also $s \in L^t(\R^d)$ for all $t \ge 2$,
in particular for $t=2$, it follows that $s \in L^t(\R^d)$ for all $t \ge 1$.
\end{proof}

\begin{lemma}
\label{lem:J2}
Under the assumptions of Theorem~\ref{thm:kotani},
$|x|^{d-2}J_2(x)$ is both $o(1)$ and $O(\|\hat h\|_{W^{n,p}(U)})$.
\end{lemma}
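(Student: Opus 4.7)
I would split $J_2$ spatially into a far piece over $\{|y|\ge 2|x|\}$ and a near piece over $\{\epsilon|x|\le|y|<2|x|\}$, and estimate each using the $L^1$, $L^\infty$, and $L^q$ bounds on $s$ and $u$ furnished by Lemma~\ref{lem:s}.

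For the far piece, the triangle inequality gives $|x-y|\ge|y|/2\ge|x|$, so $(|x|/|x-y|)^{d-2}\le 1$ and
\[
|x|^{d-2}\left|\int_{|y|\ge 2|x|}\frac{s(y)}{|x-y|^{d-2}}\,dy\right|\le \int_{|y|\ge 2|x|}|s(y)|\,dy,
\]
which is a tail of an $L^1(\R^d)$ function, hence $o(1)$, and is uniformly bounded by $\|s\|_{L^1(\R^d)}\le c\|\hat h\|_{W^{n,p}(U)}$.

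For the near piece I would use $|s(y)|=|u(y)|/|y|^n\le(\epsilon|x|)^{-n}|u(y)|$ to extract a prefactor $|x|^{d-2-n}$ (which equals $1$ for $d>4$ and $|x|^{-1}$ for $d=3,4$), leaving the integral of $|u(y)||x-y|^{-(d-2)}$ over $\{\epsilon|x|\le|y|<2|x|\}$ to be controlled. I would split this once more according to whether $|y-x|\le 1$ or $|y-x|>1$. On the inner region I would use $|u(y)|\le \eta(x):=\sup_{|y|\ge\epsilon|x|}|u(y)|$, which tends to zero and is bounded by $\|u\|_{L^\infty}\le c\|\hat h\|_{W^{n,p}(U)}$ by Lemma~\ref{lem:s}, together with the integrability of $|z|^{-(d-2)}$ on the unit ball. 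On the outer region I would apply H\"older's inequality with the conjugate exponents $p,q$:
\[
\int\frac{|u(y)|}{|x-y|^{d-2}}\,dy\le \|u\|_{L^q(|y|\ge\epsilon|x|)}\left(\int_{1<|z|\le 3|x|}|z|^{-(d-2)p}\,dz\right)^{1/p}.
\]
The first factor is a tail of an $L^q$ function (hence $o(1)$) and is uniformly bounded by $\|u\|_{L^q}\le c\|\hat h\|_{W^{n,p}(U)}$. The kernel integral is $O(1)$ for $d>4$ (where $(d-2)p>d$), $O((\log|x|)^{1/2})$ for $d=4$, and $O(|x|^{1/2})$ for $d=3$; in each case the prefactor $|x|^{d-2-n}$ precisely absorbs this growth, producing both the $o(1)$ statement and the uniform upper bound.

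The main obstacle is that the hypothesis on $p_d$ forces $(d-2)p\ge d$, so $|z|^{-(d-2)p}$ fails to be locally integrable near $z=0$. This prevents a single H\"older estimate from working and necessitates the separate treatment of the singularity at $y=x$ via the $|y-x|\le 1$ piece, where the $L^\infty$ decay of $u$ at infinity from Lemma~\ref{lem:s} is essential. The dimension-dependent matching of the prefactor $|x|^{d-2-n}$ against the growth of the kernel integral in the borderline cases $d=3,4$ is the main technical point to verify.
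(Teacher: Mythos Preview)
Your proof is correct and reaches the same conclusion, but your decomposition differs from the paper's. The paper splits $J_2$ only according to whether $|x-y|\ge 1$ or $|x-y|<1$; the piece with $|x-y|<1$ is treated exactly as your inner region. For the piece with $|x-y|\ge 1$, the paper keeps the full domain $\{|y|\ge\epsilon|x|\}$ and applies H\"older to $|x-y|^{-(d-2)}|y|^{-n}$ against $|u(y)|$. For $d>4$ this is essentially your estimate (pull out $(\epsilon|x|)^{-(d-2)}$, integrate $|x-y|^{-p(d-2)}$ over $|x-y|\ge 1$), but for $d=3,4$ the paper needs a further Cauchy--Schwarz step to separate $|x-y|^{-(d-2)}$ from $|y|^{-(d-1)}$, since neither alone gives an integrable kernel at infinity.

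Your extra far/near split in $|y|$ trades that second Cauchy--Schwarz for an elementary $L^1$-tail estimate on the far region; on the near region the $|x-y|$-kernel integral then runs over a bounded annulus $1<|z|\le 3|x|$, whose growth ($O(1)$, $O((\log|x|)^{1/2})$, $O(|x|^{1/2})$ for $d>4,4,3$) is absorbed by the prefactor $|x|^{d-2-n}$. This is arguably more transparent in low dimensions, at the cost of one more region to track. Both approaches rely on the same three inputs from Lemma~\ref{lem:s}: $s\in L^1$, $u\in L^q$, and $u(y)\to 0$ with $\|u\|_{L^\infty}$ controlled.
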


\begin{proof}
With
$A_{21}= \{|y|\ge \epsilon |x|\} \cap \{|x-y|\ge 1\}$ and
$A_{22}= \{|y|\ge \epsilon |x|\} \cap \{|x-y|< 1\}$,
we make the decomposition
\begin{align}
    J_2(x) & =
    a_d \int_{A_{21}} \frac{1}{|x-y|^{d-2}}s(y)dy
    +
    a_d \int_{A_{22}} \frac{1}{|x-y|^{d-2}}s(y)dy
    \nnb & =
    J_{21}(x)+J_{22}(x).
\end{align}

By H\"older's inequality,
\begin{align}
\lbeq{J21}
    |J_{21}(x)| & \le
    a_d \left( \int_{A_{21}}\frac{1}{|x-y|^{p(d-2)}}\frac{1}{|y|^{pn}} dy \right)^{1/p}
    \left( \int_{A_{21}}|u(y)|^q dy \right)^{1/q} .
\end{align}
For $d>4$, since
$pn=p(d-2)>d$ by hypothesis,
\begin{equation}
    \left( \int_{A_{21}}\frac{1}{|x-y|^{p(d-2)}}\frac{1}{|y|^{p(d-2)}} dy \right)^{1/p}
    \le \frac{1}{(\epsilon |x|)^{d-2}}
    \left( \int_{|x-y| \ge 1}\frac{1}{|x-y|^{p(d-2)}}  dy \right)^{1/p}
    \le \frac{c}{ (\epsilon |x|)^{d-2}}.
\end{equation}
For $d=3,4$, since $pn=2(d-1)$, we use instead
\begin{align}
    \left(  \int_{A_{21}}\frac{1}{|x-y|^{2(d-2)}|y|^{2(d-1)}} dy \right)^{1/2}
    & \le
    \left( \int_{|x-y|\ge 1} \frac{1}{|x-y|^{4(d-2)}} dy\right)^{1/4}
    \left( \int_{|y|\ge \epsilon |x|} \frac{1}{|y|^{4(d-1)}}dy \right)^{1/4},
\end{align}
and observe that
the first factor is finite since $4(d-2)>d$ for $d=3,4$,
and that since $4(d-1)>d$ the
second is $O(|x|^{-\frac 14 (3d-4)}) = O(|x|^{-(d-2)})$.
For any $d>2$, the integral in the second factor of \refeq{J21} obeys
\begin{equation}
\lbeq{A21u}
    \int_{A_{21}}|u(y)|^q dy
    \le
    \int_{|y|\ge \epsilon |x|}|u(y)|^q dy .
\end{equation}
Since $u \in L^q(\R^d)$ by Lemma~\ref{lem:s},
the right-hand side of \refeq{A21u} is $o(1)$ as $x\to\infty$, and
also its $q^{\rm th}$ root is bounded by $\|u\|_{L^q(\R^d)}$ which
at most $O(\|\hat h\|_{W^{n,p}(U)})$ by Lemma~\ref{lem:s}.

Finally,
\begin{equation}
    J_{22}(x)
    \le
    a_d \frac{1}{(\epsilon |x|)^{n}}
    \left(\sup_{|y| \ge \epsilon |x|}|u(y)| \right)
    \int_{|x-y| < 1} \frac{1}{|x-y|^{d-2}}dy
    \le
    c_\epsilon \frac{1}{ |x|^{n}}
    \left(\sup_{|y| \ge \epsilon |x|}|u(y)| \right),
\end{equation}
since the $y$-integral is finite.
For $d>4$ the power on the right-hand side is $|x|^{-(d-2)}$, and for $d=3,4$ it
is $|x|^{-(d-1)} = o(|x|^{-(d-2)})$.
By Lemma~\ref{lem:s}, the supremum factor  is $o(1)$
since $u(y) \to 0$ as $y \to \infty$, and also is at most $O(\|\hat h\|_{W^{n,p}(U)})$.
This completes the proof.
\end{proof}

\section*{Acknowledgements}

This work was supported in part by NSERC of Canada.
I am grateful to Shinichi Kotani for providing his unpublished notes in 1991
with the proof of Theorem~\ref{thm:kotani-original}
that forms the basis for the proof of Theorem~\ref{thm:kotani}
and for recent correspondence
concerning this work,
and to Takashi Hara for his translation of Professor Kotani's notes in  1991.


\end{document}